\def\squarebox#1{\hbox to #1{\hfill\vbox to #1{\vfill}}}
\newcommand{\qed}{\hspace*{\fill}
\vbox{\hrule\hbox{\vrule\squarebox{.667em}\vrule}\hrule}\smallskip}
\newtheorem{teorema}{Theorem}[section]
\newtheorem{lema}[teorema]{Lemma}
\newtheorem{corolario}[teorema]{Corollary}
\newtheorem{proposicao}[teorema]{Proposition}
\newenvironment{proof}{\noindent {\bf Proof:}}{\hfill $\qed $ \newline}
\newcommand{\R}{{\mathbb R}}
\newcommand{\Z}{{\mathbb Z}}
\newcommand{\F}{\mathbb{F}}
\newcommand{\tdcds}{{\mathbb T}}
\newcommand{\ad}{{\rm ad}}
\newcommand{\g}{\mathfrak{g}}
\newcommand{\n}{\mathfrak{n}}
\newcommand{\p}{\mathfrak{p}}
\renewcommand{\l}{\mathfrak{l}}
\renewcommand{\b}{\mathfrak{b}}
\newcommand{\T}{\Theta}
\def\prod#1{\left\langle{#1}\right\rangle}
\begin{document}

\title{Lyapunov, metric and flag spectra}
\author{Mauro Patr\~{a}o}
\maketitle

\begin{abstract}
We introduce the \emph{metric spectrum}, which measures the
exponential rate of approximation to an isolated invariant set of
points starting in its stable set, and relate it to the Lyapunov
spectrum. We determine the metric spectrum of each Morse component
of the finest Morse decomposition of a linear induced flow on a
generalized flag manifold.
\end{abstract}

\noindent \textit{AMS 2000 subject classification}: Primary:
37B35, 37D99, Secondary: 14M15.

\noindent \textit{Key words:} Lyapunov exponents, stable sets,
generalized flag manifolds.

\section{Introduction}

Let $\phi^t$ be a discrete or continuous-time flow defined on a
metric space $(X,d)$. If $M \subset X$ is a compact isolated
$\phi^t$-invariant set which is not a repeller, we introduce an
spectrum, called \emph{metric spectrum}, which measures the
exponential rate of approximation to $M$ of points starting in the
stable set of $M$, as illustrated by Figure \ref{figura1}.
\begin{figure}[h]
    \begin{center}
        \includegraphics[scale=1]{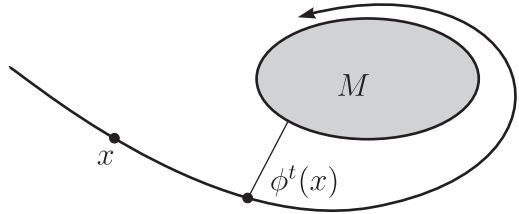}
    \end{center}
    \caption{\label{figura1}
    The approximation of $\phi^t(x)$ to $M$.}
\end{figure}

We show that this spectrum is an invariant of the conjugation
classes by bi-Lipschitz maps. In particular, when $X$ is a
manifold, the metric spectrum is independent of Riemannian
metrics. When the restriction of $\phi^t$ to an open neighborhood
of $M$ is conjugated to a linear flow $\Phi^t$, the metric
spectrum is given by the negative Lyapunov exponents of $\Phi^t$.
In particular, if $M$ is normally hyperbolic, this implies that
its metric spectrum is not empty.

Now let $g^t$ be a linear induced flow on a flag manifold $\F_\T$
of a connected noncompact real semi-simple Lie group $G$. Assuming
that $g^t$ is conformal (its unipotent Jordan component is
trivial), we determine explicitly the metric spectrum of each
Morse component of the finest Morse decomposition. This is done by
combining the linearization presented in \cite{pss-conley} with
some results on the Jordan decomposition of $g^t$ presented in
\cite{pfs-jordan}. This result generalizes the following simple
situation. Let $G = \mbox{Sl}(2,\mathbb{R})$, $\F_\T =
\mathbb{P}\mathbb{R}^2$ and
\[
g^t = \left(
\begin{array}{cc}
\mbox{e}^{\lambda t} & 0                      \\
0                    & \mbox{e}^{-\lambda t}
\end{array}
\right).
\]
We have that $[e_1]$ and $[e_2]$  are, respectively, the attractor
and repeller components of $g^t$ in $\mathbb{P}\mathbb{R}^2$. For
each $[v] \neq [e_2]$, we have that $g^t[v] \to [e_1]$, when $t
\to \infty$. The $\mbox{SO}(2,\mathbb{R})$-invariant distance $d$
in $\mathbb{P}\mathbb{R}^2$ is such that $d(g^t[v], [e_1]) =
\theta_t$, where $\theta_t$ is the angle illustrated in the Figure
\ref{figura2}.

\begin{figure}[h]
    \begin{center}
        \includegraphics[scale=1]{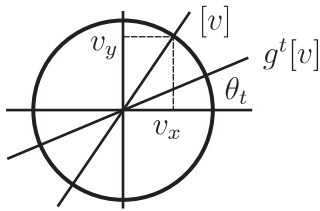}
    \end{center}
    \caption{\label{figura2}
    The approximation of $g^t[v]$ to $[e_1]$ in $\mathbb{P}\mathbb{R}^2$.}
\end{figure}

Since
\[
\lim_{t \to \infty} \frac{\mbox{tg}(\theta_t)}{\theta_t} = 1,
\]
it follows that
\[
\lim_{t \to \infty} \frac{1}{t} \log d(g^t[v], [e_1]) = -2\lambda,
\]
where we use that
\[
\mbox{tg}(\theta_t) = \frac{v_y}{v_x}\,\mbox{e}^{-2\lambda t}.
\]
For the attractor component of $g^t$ in the maximal flag manifold
$\F$, we do not need to assume that $g^t$ is conformal. When $g^t$
is arbitrary, the metric spectrum of its attractor component in
the maximal flag manifold will be called the \emph{flag spectrum}
of $g^t$.

\section{Preliminaries}\label{preliminaries}

For the theory of semi-simple Lie groups and their flag manifolds
we refer to Duistermat-Kolk-Varadarajan \cite{dkv}, Helgason
\cite{helgason}, Knapp \cite{knapp} and Warner \cite{w}. To set
notation let $G$ be a connected noncompact real semi-simple Lie
group with Lie algebra $\frak{g}$. Let ${\rm Ad}: G \to {\rm
Gl}({\frak g})$ be the adjoint representation of $G$. An element
$g \in G$ acts in the Lie algebra ${\frak g}$ by the adjoint
representation, so that for $X \in {\frak g}$ we write
\[
gX = {\rm Ad}(g)X.
\]
With this notation we have that
\[
g \exp(X) g^{-1} = \exp( gX )\qquad\mbox{and}\qquad \exp(X)Y =
{\rm e}^{{\rm ad}(X)}Y,
\]
for $g \in G$, $X, Y \in {\frak g}$.

Fix a Cartan involution $\theta $ of $ \frak{g}$, a maximal
abelian subspace $\frak{a}\subset \frak{s}$ and a Weyl chamber
$\frak{a}^{+}\subset \frak{a}$. Let $\frak{g}=\frak{k}\oplus
\frak{s}$ be the Cartan decomposition and $\prod{\cdot,\cdot}$ be
the Cartan inner product associated to $\theta$. We let $\Pi $ be
the set of roots of $\frak{a }$, $\Pi ^{+}$ the positive roots
corresponding to $\frak{a}^{+}$, $\Sigma $ the set of simple roots
in $\Pi ^{+}$.

For each $\Theta \subset \Sigma $, we denote by $\F_\T$ the
associated flag manifold, which is a homogeneous space of $G$. Let
$b_\T$ be the base point, $P_\T$ be its isotropy group and $\p_\T$
be its isotropy algebra. Each element $X \in {\frak g}$ of the Lie
algebra induces a differentiable flow in the flag manifold
${\mathbb F}_{\Theta }$ given by
\[
(t,x) \mapsto \exp(tX)x, \qquad t \in {\mathbb R},\, x \in
{\mathbb F}_{\Theta }.
\]
This flow is generated by the induced vector field which is
denoted by $X \cdot $ and its value at $x$ is denoted by $X \cdot
x$. An element $g \in G$ acts in a tangent vector $v \in T_x
{\mathbb F}_{\Theta }$ by its differential at $x$. We denote this
action by $g v = d_x g(v)$.  For induced vector fields we have
that
\[
g ( X \cdot x) = gX \cdot gx.
\]
For a fixed $x \in \F_\T$, the map $X \mapsto X \cdot x$ is a
linear map from $\g$ to $T_x\F_\T$ whose kernel is the subalgebra
of isotropy at $x$.

We have that the compact group $K = \exp(\frak{k})$ is transitive
in $\F_\T$, its isotropy subgroup at $b_\T$ is denoted by $K_\T$
and its elements acts in $\g$ by $\prod{\cdot,\cdot}$-isometries.
There exists a $K_\T$-invariant subalgebra $\n_\T$ which
complements $\p_\T$ in $\g$. Thus we can identify the tangent
space of $\F_\T$ at $b_\T$ with $\n_\T$ and $\prod{\cdot,\cdot}$
determines a $K$-invariant Riemannian metric in $\F_\T$ given by
\[
|X \cdot x| = |Y|,
\]
where $X \cdot x = k(Y \cdot b_\T)$ with $k \in K$. The Weyl group
$W=M^{*}/M$ acts on $\frak{a}$ by isometries, where $M^{*}$ and
$M$ are, respectively, the normalizer and the centralizer of
$\frak{a}$ in $K$.

We denote by $g^t$, $t \in \tdcds = \R$ or $\Z$, the right
invariant continuous-time flow generated by $X \in \g$ or the
discrete-time flow generated by $g \in G$. More precisely, when
$\tdcds = \R$, we have that $g^t = \exp(tX)$ and, when $\tdcds =
\Z$, we have that $g^t$ is the $t$-iterate of $g$. The flow
induced by $g^t$ on $\F_\T$ is called a linear induced flow. Let
$g^t = e^t h^t u^t$ be the multiplicative Jordan decomposition of
$g^t$, a commutative composition of linear induced flows. The
elliptic, hyperbolic and unipotent components of $g^t$ are,
respectively, the flow $e^t$, the flow $h^t = \exp(tH)$ and the
flow $u^t = \exp(tN)$, where $H$ is called the hyperbolic type of
the flow $g^t$ and $\ad(N)$ is a nilpotent operator. The
hyperbolic component is gradient with respect to a given
Riemannian metric on ${\mathbb F}_{\Theta }$ (see \cite{dkv},
Section 3). The connected components of fixed point set of this
flow are given by
\[
\mathrm{fix}_{\Theta }(H,w) = K_{H} wb_{\Theta },
\]
where $K_{H}$ is the centralizer of $H$ in $K$. The stable and
unstable sets of ${\rm fix}_\T(H,w)$ with respect to $h^t$ are
given, respectively, by
\[
\mathrm{st}_{\Theta }(H,w)=N_{H}^{-} \mathrm{fix}_{\Theta }(H,w)
\qquad {\rm and} \qquad \mathrm{un}_{\Theta }(H,w) = N_{H}^{+}
\mathrm{fix}_{\Theta }(H,w)
\]
where $N_H^{\pm} = \exp(\n_H^\pm)$,
\[
\frak{n}^-_H = \sum\{ \frak{g}_\alpha:\, \alpha(H) < 0 \} \qquad
{\rm and} \qquad \frak{n}^+_H = \sum\{ \frak{g}_\alpha:\,
\alpha(H) > 0 \}.
\]
For an arbitrary linear induced flow on $\F_\T$, we have the
following similar description for its finest Morse decomposition
(see Proposition 5.1 of \cite{pfs-jordan}).

\begin{proposicao}\label{propmorseflag}
Let $g^t$ be a linear induced flow on $\F_\T$. The set
$$
\{{\rm fix}_\T(H,w): w \in W \}
$$
is the finest Morse decomposition for $g^t$. Furthermore, the
stable and unstable sets of ${\rm fix}_\T(H,w)$ with respect to
$g^t$ are given, respectively, by
$$
{\rm st}_\T(H,w) \qquad \emph{and} \qquad {\rm un}_\T(H,w).
$$
\end{proposicao}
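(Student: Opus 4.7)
The plan is to reduce the general case to the gradient-flow case for the hyperbolic component $h^t = \exp(tH)$, using the multiplicative Jordan decomposition $g^t = e^t h^t u^t$ whose factors commute. For $h^t$ the excerpt already records the fixed-point components ${\rm fix}_\T(H,w) = K_H w b_\T$ and their stable and unstable sets ${\rm st}_\T(H,w) = N_H^- {\rm fix}_\T(H,w)$, ${\rm un}_\T(H,w) = N_H^+ {\rm fix}_\T(H,w)$. Since $h^t$ is gradient, this decomposition is automatically its finest Morse decomposition. The task is to show that turning on $e^t$ and $u^t$ changes neither the components nor their stable and unstable sets.

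First, I would argue invariance. Because the three Jordan factors commute, $e^t$ and $u^t$ preserve ${\rm fix}(h^t)$, and by continuity of the one-parameter families together with connectedness of each $K_H w b_\T$, they preserve each component ${\rm fix}_\T(H,w)$. The infinitesimal generators of $e^t$ and $u^t$ commute with $H$, so they preserve every root space $\g_\alpha$ and therefore normalize $N_H^\pm$; hence ${\rm st}_\T(H,w)$ and ${\rm un}_\T(H,w)$ are also $g^t$-invariant.

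Second, I would verify that $g^t$ is chain recurrent on each ${\rm fix}_\T(H,w)$, which is what certifies these sets as the Morse components of $g^t$ itself. Restricted to such a fixed set $h^t$ acts trivially, so $g^t$ reduces there to $e^t u^t$; the closure of $\{e^t\}$ lies in a compact subgroup, giving a torus action that acts by isometries and is automatically chain recurrent. For $u^t = \exp(tN)$ one uses the description in \cite{pfs-jordan} of the position of $\ad(N)$ relative to the isotropy algebras at points of $K_H w b_\T$, which guarantees that the nilpotent generator acts tangentially to the $K_H$-orbit and compatibly with the elliptic rotation. Pushing chain recurrence through in the presence of the unipotent factor is the main technical obstacle of the proof, since $u^t$ alone need not be chain recurrent on a general invariant set.

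Finally, to identify the stable and unstable sets, I would compare asymptotics of $g^t$ and $h^t$. On ${\rm st}_\T(H,w)$ one has exponential convergence of $h^t x$ to ${\rm fix}_\T(H,w)$, while $e^t$ preserves the $K$-invariant metric and $u^t$ distorts distances only polynomially in a neighbourhood of the fixed set; therefore $g^t x \to {\rm fix}_\T(H,w)$ as well. This gives the inclusion ${\rm st}_\T(H,w) \subset$ (stable set of ${\rm fix}_\T(H,w)$ under $g^t$), and equality follows because both sides form finite $g^t$-invariant partitions of $\F_\T$ indexed by $w \in W$ and matched index by index. The unstable case is symmetric upon replacing $g^t$ by $g^{-t}$.
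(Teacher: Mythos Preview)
The paper does not actually prove this proposition: it is quoted in the preliminaries with the parenthetical ``(see Proposition 5.1 of \cite{pfs-jordan})'' and no argument is given. So there is no in-paper proof to compare your sketch against; the result is imported wholesale from \cite{pfs-jordan}.

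Your outline is the natural strategy and is, in broad strokes, the one carried out in that reference: reduce to the gradient flow $h^t$, then show that the elliptic and unipotent factors neither enlarge the chain-recurrent set nor alter the stable/unstable partitions. Two remarks on the details. First, a small slip: commuting with $H$ only forces $e^t$ and $u^t$ to preserve the \emph{eigenspaces} of $\ad(H)$, not the individual root spaces $\g_\alpha$ (distinct roots can have the same value on $H$). This does not hurt your conclusion, since $\n_H^\pm$ are sums of such eigenspaces, but the sentence as written is inaccurate. Second, you correctly flag the chain-recurrence of $e^t u^t$ on ${\rm fix}_\T(H,w)$ as the crux, but your paragraph does not supply an argument---you invoke \cite{pfs-jordan} for the position of $N$ relative to the isotropy, which is precisely where the real work of that paper lies. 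Without that input your sketch remains an outline rather than a proof; the honest statement is that you are re-citing the same source the paper cites.
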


Now we define the subspace
\begin{equation}\label{eqdefl}
{\frak l}^\pm_{wb_\T} = \frak{n}^\pm_H \cap w{\frak n}^-_\T
\end{equation}
and the family of subspaces $\frak{l}^\pm_{x}\subset
\frak{n}^\pm_H$ given by
\[
{\frak l}^\pm_x = k {\frak l}^\pm_{wb_\T},
\]
for $x = k w b_\Theta$, where $k \in K_H$. By Proposition 3.1 of
\cite{pss-conley}, the families $\frak{l}^\pm_{x}$ are well
defined, each $\l_x$ is $h^t$-invariant and, for $k \in K_H$, we
have
\[
k \l^\pm_x = \l^\pm_{kx}.
\]
Furthermore we have that the map
\[
X \in \frak{l}^\pm_{x}\mapsto X \cdot x \in \frak{l}^\pm_{x}\cdot
x
\]
is a linear isomorphism, for each $x \in \mathrm{fix}_{\Theta
}(H,w)$.

By Propositions 3.2 and 3.3 of \cite{pss-conley}, we have that
\[
V_{\Theta }^\pm(H,w) = \bigcup \{ \frak{l}^\pm_{x}\cdot x:\, x \in
\mathrm{fix}_{\Theta }(H,w) \}
\]
are differentiable subbundles of the tangent bundle of $\F_\T$
over $\mathrm{fix}_{\Theta }(H,w)$ such that its Whitney sum
\begin{equation}\label{eqwhitneysum}
V_{\Theta }\left( H,w\right) = V_{\Theta }^{+}(H,w)\oplus
V_{\Theta }^{-}(H,w)
\end{equation}
is the normal bundle of $\mathrm{fix}_{\Theta }(H,w)$ and whose
fiber at $x \in \mathrm{fix}_{\Theta }(H,w)$ is given by
\[
V_{\Theta }\left( H,w\right)_x = \frak{l}_{x} \cdot x, \qquad
\mbox{where} \qquad \frak{l}_{x} = \frak{l}^+_{x} \oplus
\frak{l}^-_{x}.
\]
It follows that the map
\[
X \in \frak{l}_{x}\mapsto X \cdot x \in V_{\Theta }\left(
H,w\right)_x
\]
is a linear isomorphism, for each $x \in \mathrm{fix}_{\Theta
}(H,w)$.

Now we define the linearization map by
\begin{equation}  \label{eqlinearizacao}
\psi :V_{\Theta }(H,w)\to {\mathbb F}_{\Theta }, \qquad \psi
\left( X \cdot x \right) =\exp (X)x,
\end{equation}
where $X \in \frak{l}_{x}$ and $x \in {\rm fix}_\T(H,w)$. By
Theorem 3.4 of \cite{pss-conley}, we have the following result.

\begin{teorema}\label{teolineariz}
The map $\psi : V_{\Theta }(H,w)\to {\mathbb F}_{\Theta }$ takes
the null section $V_{\Theta }(H,w)_0$ onto ${\rm fix}_\T(H,w)$ and
satisfies:

\begin{itemize}
\item[i)] Its restriction to some neighborhood ${\cal N}$ of
$V_{\Theta }(H,w)_0$ in $V_{\Theta }(H,w)$ is a diffeomorphism
over a neighborhood $N$ of ${\rm fix}_\T(H,w)$ in $\F_\T$.

\item[ii)] Its restrictions to $V^\pm_{\Theta }(H,w)$ are
diffeomorphisms, respectively, onto ${\rm un}_\T(H,w)$ and ${\rm
st}_\T(H,w)$.
\end{itemize}
\end{teorema}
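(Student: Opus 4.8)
The plan is to establish the three assertions in turn: the null-section claim is a one-line substitution, part (i) comes from computing the differential of $\psi$ along the null section and invoking the inverse function theorem along a compact submanifold, and part (ii) comes from the $K_H$-equivariance of $\psi$ together with the structure of nilpotent orbits on $\F_\T$. For the null section, taking $X=0$ gives $\psi(0\cdot x)=\exp(0)x=x$, so $\psi$ carries $V_\Theta(H,w)_0$ bijectively onto ${\rm fix}_\T(H,w)$, and under the standard identification of the null section with ${\rm fix}_\T(H,w)$ this restriction is the identity.

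For part (i), fix $x\in{\rm fix}_\T(H,w)$ and compute $d\psi$ at the zero vector $0_x$. The tangent space $T_{0_x}V_\Theta(H,w)$ splits into a horizontal part, canonically $T_x{\rm fix}_\T(H,w)$, and a vertical part, canonically the fibre $V_\Theta(H,w)_x=\l_x\cdot x$. On the horizontal part $\psi$ agrees with the identity of ${\rm fix}_\T(H,w)$, so there $d\psi_{0_x}$ is the inclusion $T_x{\rm fix}_\T(H,w)\hookrightarrow T_x\F_\T$; on the vertical part, differentiating $s\mapsto\psi(sX\cdot x)=\exp(sX)x$ at $s=0$ gives $X\cdot x$, so there $d\psi_{0_x}$ is the inclusion $\l_x\cdot x\hookrightarrow T_x\F_\T$. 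Since (\ref{eqwhitneysum}) exhibits $\l_x\cdot x$ as the normal space to ${\rm fix}_\T(H,w)$ at $x$, these two pieces assemble to the canonical isomorphism $T_x{\rm fix}_\T(H,w)\oplus(\l_x\cdot x)\cong T_x\F_\T$, so $d\psi_{0_x}$ is a linear isomorphism. The inverse function theorem then makes $\psi$ a local diffeomorphism near each $0_x$, and since ${\rm fix}_\T(H,w)=K_Hwb_\T$ is compact the usual tubular-neighborhood argument upgrades this to a diffeomorphism from a neighborhood ${\cal N}$ of the null section onto a neighborhood $N$ of ${\rm fix}_\T(H,w)$ in $\F_\T$.

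For part (ii) I would first record that $\psi$ is $K_H$-equivariant: for $k\in K_H$ one has $k\l^\pm_x=\l^\pm_{kx}$ and $k(X\cdot x)=kX\cdot kx$, hence $\psi(k(X\cdot x))=\psi(kX\cdot kx)=\exp(kX)\,kx=k\exp(X)x=k\,\psi(X\cdot x)$, so $\psi$ intertwines the $K_H$-actions on $V^+_\Theta(H,w)$ and on $\F_\T$. As ${\rm fix}_\T(H,w)=K_Hwb_\T$ and $V^+_\Theta(H,w)$ is the $K_H$-homogeneous bundle whose fibre over $wb_\T$ is $\l^+_{wb_\T}\cdot wb_\T\cong\n^+_H\cap w\n^-_\T$, it suffices to analyse $\psi$ on this fibre, where $\psi(Y\cdot wb_\T)=\exp(Y)wb_\T$ for $Y\in\n^+_H\cap w\n^-_\T$, and then to track the gluing. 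Two structural facts drive the argument: (a) $K_H=Z_K(H)$ preserves the $\ad(H)$-eigenspaces and hence normalizes $N^+_H=\exp(\n^+_H)$, so that ${\rm un}_\T(H,w)=N^+_HK_Hwb_\T=K_H(N^+_Hwb_\T)$; and (b) since $w\p_\T$ and $w\n^-_\T$ are $\ad(H)$-invariant complementary subspaces, $\n^+_H=(\n^+_H\cap w\p_\T)\oplus(\n^+_H\cap w\n^-_\T)$ with the first summand contained in the isotropy algebra $w\p_\T$ at $wb_\T$, whence a big-cell computation in the nilpotent group $N^+_H$ --- using that $\exp$ is a diffeomorphism of $\n^+_H$ onto $N^+_H$ and that the $N^+_H$-stabilizer of $wb_\T$ equals $\exp(\n^+_H\cap w\p_\T)$ --- shows that $Y\mapsto\exp(Y)wb_\T$ is a diffeomorphism of $\n^+_H\cap w\n^-_\T$ onto the orbit $N^+_Hwb_\T$. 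Feeding these into the equivariance, and using that $N^+_H\cap K_H=\{e\}$ because $N^+_H$ is unipotent and $K_H$ compact, identifies $\psi|_{V^+_\Theta(H,w)}$ as a diffeomorphism onto $K_H(N^+_Hwb_\T)={\rm un}_\T(H,w)$. The case of $V^-_\Theta(H,w)$ and ${\rm st}_\T(H,w)$ is the mirror image, with $\n^-_H$ and $N^-_H$ in place of $\n^+_H$ and $N^+_H$.

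I expect the main obstacle to be the global part of (ii). Part (i) is essentially automatic once $\l_x\cdot x$ is identified with the normal space: it is the inverse function theorem along a compact submanifold. The substantive point is showing that $\psi$ unwinds the whole set ${\rm un}_\T(H,w)=N^+_HK_Hwb_\T$ bijectively, which rests on the big-cell decomposition of $N^+_H$ adapted simultaneously to $H$ and to $w$ --- in particular on the identity $\n^+_H=(\n^+_H\cap w\p_\T)\oplus(\n^+_H\cap w\n^-_\T)$ and on the claim that $N^+_H\cap wP_\T w^{-1}$ is exactly $\exp(\n^+_H\cap w\p_\T)$, with no extra components. Verifying these compatibilities inside the semisimple structure theory is where the real work lies.
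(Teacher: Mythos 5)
The paper does not actually prove Theorem~\ref{teolineariz}: it quotes it verbatim as Theorem 3.4 of \cite{pss-conley}, so there is no in-text proof to compare your attempt against. What can be assessed is the internal soundness of your argument, and there the null-section claim and part~(i) are fine --- the identification of the vertical derivative with $X\mapsto X\cdot x$, the appeal to (\ref{eqwhitneysum}) to see that this hits the whole normal space, and the compact tubular-neighborhood upgrade are all correct.

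The gap is in part~(ii), at exactly the point you flag as ``tracking the gluing,'' and the ingredient you propose for it is the wrong one. The fact $N^+_H\cap K_H=\{e\}$ does not control the ambiguity you actually have. After reducing by equivariance to $k''\exp(Y)wb_\T=\exp(Y')wb_\T$ with $k''\in K_H$, and rewriting the left side as $\exp(k''Y)\,k''wb_\T$ (using that $K_H$ normalizes $N^+_H$), you obtain $k''wb_\T\in N^+_Hwb_\T\cap\mathrm{fix}_\T(H,w)$. To conclude $k''wb_\T=wb_\T$ --- and hence that $k''$ lies in the isotropy $K_H\cap wK_\T w^{-1}$, after which your fibrewise diffeomorphism finishes the job --- you need the statement $N^+_Hwb_\T\cap\mathrm{fix}_\T(H,w)=\{wb_\T\}$, which is a dynamical fact about the hyperbolic flow $h^t$: every $\exp(Y)wb_\T$ with $Y\neq 0$ escapes to infinity under $h^t$ while $\mathrm{fix}_\T(H,w)$ is pointwise fixed. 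Your proposal uses no dynamics at all in part~(ii), which is why the argument cannot close; the structural facts $N^+_H\cap K_H=\{e\}$ and the big-cell decomposition of $N^+_H$ give you the fibre over $wb_\T$ but not the global bundle identification $\mathrm{un}_\T(H,w)\cong K_H\times_{K_H\cap wK_\T w^{-1}}N^+_Hwb_\T$. Either supply the dynamical observation above, or replace the $K_H$-equivariance bookkeeping with the description of $\mathrm{un}_\T(H,w)\to\mathrm{fix}_\T(H,w)$ as the $\omega^-$-limit fibration of the gradient flow $h^t$, which is how this is typically handled in \cite{dkv} and \cite{pss-conley}.

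Two smaller points. First, the claim that $Y\mapsto\exp(Y)wb_\T$ is a global (not merely local) diffeomorphism of $\n^+_H\cap w\n^-_\T$ onto $N^+_Hwb_\T$ needs the observation that both $\n^+_H\cap w\n^-_\T$ and $\n^+_H\cap w\p_\T$ are subalgebras of the nilpotent algebra $\n^+_H$ (they are, being sums of restricted root spaces closed under bracket), and then the standard fact that for simply connected nilpotent groups a complementary subalgebra gives a global cross-section of the quotient; you state the conclusion but should at least cite or prove this. Second, the claim that the $N^+_H$-stabilizer of $wb_\T$ is exactly $\exp(\n^+_H\cap w\p_\T)$ requires connectedness of that stabilizer, which does hold because a unipotent algebraic subgroup of a unipotent group is connected, but this is the kind of assertion that should be justified rather than waved at.
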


The above map $\psi : V_{\Theta }(H,w)\to {\mathbb F}_{\Theta }$
is a conjugation of the flow $g^t$ on a neighborhood of the
attractor component $\mathrm{fix}_{\Theta }\left( H,w\right)$ to
the linear flow
\[
g^t(X \cdot x) = g^t X \cdot g^t x
\]
on a neighborhood of the null section of $V_{\Theta }(H,w)$ if and
only if $V_{\Theta }(H,w)$ is invariant and $\psi$ is equivariant
by the flow $g^t$. A sufficient condition is that the map $x
\mapsto \mathfrak{l}_x$ be equivariant by $g^t$, i.e, that
$g^t\mathfrak{l}_{x} = \mathfrak{l}_{g^tx}$. For the attractor
component $\mathrm{fix}_{\Theta }\left( H,1\right)$, by Corollary
3.6 of \cite{pss-conley}, this happens whenever either $\Theta
\subset \Sigma(H)$ or $\Sigma (H)\subset \Theta$, where
$\Sigma(H)$ is the annihilator of $H$ in the simple roots
$\Sigma$. For the other Morse components, by Proposition 5.5 of
\cite{pfs-jordan}, it is sufficient that $g^t$ be conformal, i.e.,
its unipotent component be trivial.

\section{Lyapunov and metric spectra}

Let $\phi^t$ be a discrete or continuous-time flow defined on a
compact metric space $(X,d)$. If $M \subset X$ is an isolated
compact $\phi^t$-invariant set, its stable set is given by
\[
\mathrm{st}(M) = \{ x \in X : \omega(x) \subset M \}.
\]
Let us assume that $M$ is not a repeller, which is equivalent to
that $\mathrm{st}(M)$ is not empty. We say that $x \in
\mathrm{st}(M)$ is \emph{metric regular} if there exists the
following limit
\[
\lambda(\phi^t,x) = \lim_{t \to \infty} \frac{1}{t} \log
d(\phi^t(x),M),
\]
called the \emph{metric exponent of $\phi^t$ starting at $x$},
measuring the exponential rate of approximation of $\phi^t(x)$ to
$M$. The \emph{metric spectrum of $\phi^t$ relative to $M$} is
defined by
\[
\Lambda(\phi^t,M) = \{\lambda(\phi^t,x) : x \mbox{ is metric
regular}\}.
\]
The following result shows that $\Lambda(\phi^t,M)$ is an
invariant of the conjugation classes by bi-Lipschitz maps.

\begin{proposicao}
Let $(\overline{X},\overline{d})$ be a metric space and $\psi : X
\to \overline{X}$ be a bi-Lipschitz map. If $\overline{\phi}^t =
\psi\phi^t\psi^{-1}$, $\overline{x} = \psi(x)$ and $\overline{M} =
\psi(M)$, then
\[
\lambda(\phi^t,x) = \lambda(\overline{\phi}^t,\overline{x})
\]
and
\[
\Lambda(\phi^t,M) = \Lambda(\overline{\phi}^t,\overline{M}).
\]
In particular, $\Lambda(\phi^t,M)$ is independent of equivalent
metrics.
\end{proposicao}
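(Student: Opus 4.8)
The plan is to exploit the single bi-Lipschitz constant of $\psi$ to compare the two distance functions along orbits; the normalization by $\frac{1}{t}\log(\cdot)$ then absorbs any multiplicative constant. Fix $c \ge 1$ such that $c^{-1}d(a,b) \le \overline{d}(\psi(a),\psi(b)) \le c\,d(a,b)$ for all $a,b \in X$ (the existence and bi-Lipschitz character of $\psi^{-1}$ are implicit in the statement, and they guarantee that $\overline{M} = \psi(M)$ is again a compact isolated $\overline{\phi}^t$-invariant set which is not a repeller, so that $\Lambda(\overline{\phi}^t,\overline{M})$ is well defined). From the conjugation one has $\overline{\phi}^t(\overline{x}) = \psi(\phi^t(x))$ for all $x$ and $t$, and $\overline{M} = \psi(M)$ by hypothesis, so $\overline{d}(\overline{\phi}^t(\overline{x}),\overline{M}) = \inf_{m \in M}\overline{d}(\psi(\phi^t(x)),\psi(m))$. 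Applying the estimate above with $a = \phi^t(x)$ and $b = m$ and taking the infimum over $m \in M$ gives
\[
c^{-1}\, d(\phi^t(x),M) \;\le\; \overline{d}(\overline{\phi}^t(\overline{x}),\overline{M}) \;\le\; c\, d(\phi^t(x),M) .
\]

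Next I would match up the stable sets. Since $X$ is compact, $\omega(x)$ is a nonempty compact set with $d(\phi^t(x),\omega(x)) \to 0$, so $x \in \mathrm{st}(M)$ precisely when $d(\phi^t(x),M) \to 0$ as $t \to \infty$; the same holds for $\overline{\phi}^t$ on $\overline{X}$. By the two-sided bound, $d(\phi^t(x),M) \to 0$ if and only if $\overline{d}(\overline{\phi}^t(\overline{x}),\overline{M}) \to 0$, so $\psi$ restricts to a bijection of $\mathrm{st}(M)$ onto $\mathrm{st}(\overline{M})$. The same bound also shows that $d(\phi^t(x),M) = 0$ if and only if $\overline{d}(\overline{\phi}^t(\overline{x}),\overline{M}) = 0$; thus in the degenerate case $x \in M$ (equivalently $\phi^t(x) \in M$ for all $t$, by invariance of $M$) both metric exponents equal $-\infty$, and we may henceforth assume $x \in \mathrm{st}(M) \setminus M$, so that both distances are strictly positive for every $t$.

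Finally, for such an $x$, taking logarithms and dividing by $t > 0$ in the two-sided bound yields
\[
\left| \frac{1}{t}\log \overline{d}(\overline{\phi}^t(\overline{x}),\overline{M}) - \frac{1}{t}\log d(\phi^t(x),M) \right| \;\le\; \frac{\log c}{t} \;\longrightarrow\; 0 .
\]
Hence the two normalized quantities have exactly the same limiting behaviour: $\lambda(\overline{\phi}^t,\overline{x})$ exists if and only if $\lambda(\phi^t,x)$ exists, and then they coincide. Therefore $x$ is metric regular for $\phi^t$ relative to $M$ if and only if $\overline{x} = \psi(x)$ is metric regular for $\overline{\phi}^t$ relative to $\overline{M}$; since $\psi$ also maps $\mathrm{st}(M)$ bijectively onto $\mathrm{st}(\overline{M})$, it identifies the two sets of metric regular points while preserving the value of the metric exponent, and taking the set of these values gives $\Lambda(\phi^t,M) = \Lambda(\overline{\phi}^t,\overline{M})$. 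The assertion about equivalent metrics is the special case $\overline{X} = X$, $\psi = \id$, where bi-Lipschitz equivalence of $d$ and $\overline{d}$ is exactly the existence of such a constant $c$. The argument is essentially routine; the only step that deserves care is the equivalence ``$x \in \mathrm{st}(M)$ iff $d(\phi^t(x),M) \to 0$'', which uses the compactness of $X$ and is precisely what guarantees that the two spectra are being compared over genuinely corresponding sets of metric regular points.
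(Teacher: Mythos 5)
Your proof is correct and follows the same strategy as the paper's: sandwich $\overline{d}(\overline{\phi}^t(\overline{x}),\overline{M})$ between constant multiples of $d(\phi^t(x),M)$ using the bi-Lipschitz constants of $\psi$, and observe that the normalization by $\frac{1}{t}\log$ annihilates the multiplicative constants in the limit. You are somewhat more scrupulous than the paper in explicitly checking that $\psi$ carries $\mathrm{st}(M)$ bijectively onto $\mathrm{st}(\overline{M})$ and in handling the degenerate case $x\in M$, but the core argument is identical.
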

\begin{proof}
Since $\psi$ is a bi-Lipschitz map, there exist positive constants
$b, c \in \mathbb{R}$ such that
\[
b d(x,y) \leq \overline{d}(\psi(x),\psi(y)) \leq c d(x,y),
\]
for every $x,y \in X$. We have that
\[
\lambda(\overline{\phi}^t,\psi(x)) = \lim_{t \to \infty}
\frac{1}{t} \log \overline{d}(\overline{\phi}^t(\psi(x)),\psi(M))
= \lim_{t \to \infty} \frac{1}{t}
\log\overline{d}(\psi(\phi^t(x)),\psi(M))
\]
and thus
\[
\lim_{t \to \infty} \frac{1}{t} \log b d(\phi^t(x),M) \leq
\lambda(\psi(x),\overline{\phi}^t) \leq \lim_{t \to \infty}
\frac{1}{t} \log c d(\phi^t(x),M)
\]
showing that $\lambda(\phi^t,x) =
\lambda(\overline{\phi}^t,\psi(x))$, since
\[
\lambda(\phi^t,x) = \lim_{t \to \infty} \frac{1}{t} \log b
d(\phi^t(x),M) = \lim_{t \to \infty} \frac{1}{t} \log c
d(\phi^t(x),M).
\]
The last assertion follows, since two metrics $d$ and
$\overline{d}$ are equivalent if and only if the identity map from
$(X, d)$ to $(X, \overline{d})$ is a bi-Lipschitz map.
\end{proof}

Since diffeomorphisms between Riemannian manifolds are locally
bi-Lipschitz maps, we have the following result.

\begin{corolario}\label{corlipschitz}
Let $\psi : X \to \overline{X}$ be a diffeomorphism between the
Riemannian manifolds $(X,d)$ and $(\overline{X},\overline{d})$. If
$\overline{\phi}^t = \psi\phi^t\psi^{-1}$, $\overline{x} =
\psi(x)$ and $\overline{M} = \psi(M)$, then
\[
\lambda(\phi^t,x) = \lambda(\overline{\phi}^t,\overline{x})
\]
and
\[
\Lambda(\phi^t,M) = \Lambda(\overline{\phi}^t,\overline{M}).
\]
In particular, $\Lambda(M,\phi^t)$ is independent of Riemannian
metrics.
\end{corolario}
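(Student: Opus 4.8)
The plan is to deduce the statement from the previous Proposition by checking that, under the compactness hypothesis on $X$ already in force in this section, a diffeomorphism $\psi : X \to \overline{X}$ between Riemannian manifolds is in fact a \emph{global} bi-Lipschitz map and not merely a local one. Once this is established, the identities $\lambda(\phi^t,x) = \lambda(\overline{\phi}^t,\overline{x})$ and $\Lambda(\phi^t,M) = \Lambda(\overline{\phi}^t,\overline{M})$ follow at once by applying the Proposition to $\psi$.

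To produce the bi-Lipschitz constants, first note that $\overline{X} = \psi(X)$ is compact, being the continuous image of a compact space. Since $\psi$ and $\psi^{-1}$ are $C^1$, the operator norms $x \mapsto \|d_x\psi\|$ and $y \mapsto \|d_y\psi^{-1}\|$ are continuous functions on $X$ and on $\overline{X}$ respectively, hence bounded: there are positive constants $c$ and $b^{-1}$ with $\|d_x\psi\| \leq c$ for all $x \in X$ and $\|d_y\psi^{-1}\| \leq b^{-1}$ for all $y \in \overline{X}$. Now recall that the Riemannian distance is the infimum of the lengths of piecewise $C^1$ curves joining two points. Given $x,y \in X$ and a curve $\gamma$ from $x$ to $y$, the curve $\psi\circ\gamma$ joins $\psi(x)$ to $\psi(y)$ and has length at most $c$ times the length of $\gamma$; taking the infimum over $\gamma$ gives $\overline{d}(\psi(x),\psi(y)) \leq c\, d(x,y)$. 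Applying the same argument to $\psi^{-1}$ gives $d(x,y) \leq b^{-1}\overline{d}(\psi(x),\psi(y))$, that is, $b\, d(x,y) \leq \overline{d}(\psi(x),\psi(y))$. Thus $\psi$ satisfies the bi-Lipschitz estimate required in the hypothesis of the previous Proposition.

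With $\psi$ bi-Lipschitz, the Proposition yields both displayed identities immediately. For the final assertion, if $d$ and $\overline{d}$ are the distance functions of two Riemannian metrics on the same compact manifold $X$, the identity map $(X,d) \to (X,\overline{d})$ is a diffeomorphism, hence bi-Lipschitz by the argument above, so that $\Lambda(\phi^t,M)$ computed with $d$ coincides with $\Lambda(\phi^t,M)$ computed with $\overline{d}$. The only delicate point is the passage from the local bi-Lipschitz property of a diffeomorphism to a global one, and this is precisely where the compactness of $X$, and hence of $\overline{X}$, enters; on a noncompact manifold the statement would in general fail.
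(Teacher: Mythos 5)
Your proof is correct, but it takes a slightly different route from the paper's. The paper does not invoke the global compactness of $X$; instead it uses only that $X$, being a manifold, is locally compact, together with the hypothesis that $M$ is compact, to extract a compact neighborhood $B$ of $M$ on which the (locally bi-Lipschitz) diffeomorphism $\psi$ restricts to a bi-Lipschitz map onto its image. Since the metric exponent $\lambda(\phi^t,x)$ only depends on $d(\phi^t(x),M)$ for large $t$, and $\phi^t(x)$ eventually enters $B$ because $\omega(x)\subset M$, the argument localizes to $B$ and one then applies the previous Proposition there. Your approach instead observes that $X$ is compact (by the standing assumption at the start of the section), so $\psi$ is \emph{globally} bi-Lipschitz by a uniform bound on $\|d_x\psi\|$ and $\|d_y\psi^{-1}\|$ followed by the usual curve-length estimate; the Proposition then applies directly with no further localization. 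Your version is cleaner and avoids having to tacitly discard the initial segment of the orbit, but it uses compactness of $X$ in an essential way; the paper's version is phrased so as to go through even for noncompact $X$ as long as $M$ is compact, which better matches the slightly more general Riemannian setting appearing in the subsequent Proposition~\ref{proplyapunovmetric}. Both are valid proofs of the stated corollary.
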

\begin{proof}
Since $X$ is locally compact and $M$ is a compact subset, there
exists an compact neighborhood $B$ of $M$. Since $\psi$ is a
diffeomorphism, it is a locally bi-Lipschitz map and its
restriction to $B$ is a bi-Lipschitz map onto its image.
\end{proof}

Now we establish the connection between the Lyapunov and the
metric spectra. We recall that for a linear flow $\Phi^t$ of a
normed vector bundle $(V,|\cdot|)$, we say that $v \in V$ is
\emph{Lyapunov regular} if there exists the following limit
\[
\lambda_L(\Phi^t,v) = \lim_{t \to \infty} \frac{1}{t} \log |\Phi^t
v|,
\]
called the \emph{Lyapunov exponent of $\Phi^t$ starting at $v$},
measuring the exponential rate of growth of $|\Phi^t v|$. The
\emph{Lyapunov spectrum of $\Phi^t$} is defined by
\[
\Lambda_L(\Phi^t,V) = \{\lambda_L(\Phi^t,v) : v \mbox{ is Lyapunov
regular}\}.
\]
The \emph{stable Lyapunov spectrum of $\Phi^t$} is defined by
\[
\Lambda_L(\Phi^t,S) = \{\lambda_L(\Phi^t,v) : v \in S \mbox{ and
}v \mbox{ is Lyapunov regular}\}.
\]
where $S$ is the stable set of the null section of $V$.

\begin{proposicao}\label{proplyapunovmetric}
Let $(X,d)$ be a Riemannian manifold and $\psi : N \to A$ be a
diffeomorphism between some open neighborhood $N$ of the null
section $Z$ of the normal bundle $V$ of $M$ and some open
neighborhood $A$ of $M$. If $M = \psi(Z)$ and
$\psi^{-1}\phi^t\psi$ is the restriction to $N$ of a linear flow
$\Phi^t$ of $V$, then
\[
\lambda(\phi^t,\psi(v)) = \lambda_L(\Phi^t,v),
\]
for all $v \in S$, Lyapunov regular, and
\[
\Lambda(\phi^t,M) = \Lambda_L(\Phi^t,S).
\]
Furthermore, we have that $\Lambda(\phi^t,M)$ is not empty.
\end{proposicao}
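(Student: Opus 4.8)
The plan is to transfer the metric-spectrum computation into the linear vector bundle via $\psi$ and then identify the metric distance to $M$ with the norm on the normal bundle. First I would observe that, by Corollary \ref{corlipschitz} applied to the diffeomorphism $\psi : N \to A$ (shrinking $N$ to a compact neighborhood of $M$ if needed so that $\psi$ is genuinely bi-Lipschitz there), the metric exponent $\lambda(\phi^t,\psi(v))$ computed with the Riemannian distance $d$ on $A$ equals the metric exponent $\lambda(\psi^{-1}\phi^t\psi, v) = \lambda(\Phi^t, v)$ computed with the pulled-back Riemannian distance $\psi^*d$ on $N \subset V$. So it suffices to work entirely inside the normal bundle $V$ with the flow $\Phi^t$, comparing the distance to the null section $Z$ with the fiber norm $|\cdot|$.

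The key step is then the comparison: for $v \in V$ close to $Z$, the Riemannian distance $\psi^*d(v, Z)$ and the fiber norm $|v|$ are Lipschitz-equivalent on a neighborhood of $Z$. Indeed, fixing any Riemannian metric on the total space $V$, the function $v \mapsto \mathrm{dist}(v, Z)$ near the compact submanifold $Z$ is comparable to $|v|$ because $Z$ is the zero section and the fiber norm restricted to a tubular neighborhood gives exactly such a transverse measure of distance; more precisely, one uses that both functions vanish exactly on $Z$, are smooth off $Z$, and have comparable first-order behavior transverse to $Z$ (their ratio extends continuously and positively over the compact $Z$). Hence there are constants $0 < b \le c$ with $b|v| \le \psi^*d(v,Z) \le c|v|$ for $v$ in a neighborhood of $Z$. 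Since $\Phi^t v \to Z$ as $t \to \infty$ for $v \in S$, eventually $\Phi^t v$ lies in this neighborhood, so
\[
\lim_{t\to\infty} \frac{1}{t}\log \psi^*d(\Phi^t v, Z) = \lim_{t\to\infty}\frac{1}{t}\log |\Phi^t v|
\]
whenever either side exists, giving $\lambda(\Phi^t, v) = \lambda_L(\Phi^t, v)$ for Lyapunov-regular $v \in S$, and thus the equality of the full spectra $\Lambda(\phi^t, M) = \Lambda_L(\Phi^t, S)$.

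Finally, for nonemptiness: the linear flow $\Phi^t$ on the finite-dimensional vector bundle $V$ has a finite Lyapunov spectrum (Lyapunov regular vectors exist — indeed the flow restricted to a fiber over a recurrent point, or more simply the fact that a linear flow on a vector bundle over a compact base admits a Selgrade/Oseledets-type decomposition, guarantees Lyapunov-regular points), and the stable set $S$ of the null section is nonempty because $\mathrm{st}(M)$ is nonempty by the standing hypothesis that $M$ is not a repeller — pulling back a point of $\mathrm{st}(M) \cap A$ through $\psi$ gives a point of $S$, and shrinking toward $Z$ along the contracting directions we can choose it Lyapunov regular. Hence $\Lambda(\phi^t,M) = \Lambda_L(\Phi^t,S) \neq \emptyset$. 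The main obstacle I anticipate is making the norm-versus-distance comparison fully rigorous near the zero section: one must be careful that the Lipschitz constants are uniform over the (compact) base $Z$ and that "distance to $Z$" in the ambient Riemannian metric is genuinely comparable to the fiber norm rather than merely to some other transverse coordinate — this is where compactness of $M$ (hence of $Z$) is essential.
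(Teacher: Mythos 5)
Your overall strategy — transfer to the normal bundle via $\psi$ and Corollary \ref{corlipschitz}, identify distance-to-$Z$ with the fiber norm near the zero section, then invoke an Oseledec-type result for nonemptiness — is the same as the paper's, and the first two steps are sound. For the norm-versus-distance comparison you flag as your main obstacle, the paper sidesteps a general Lipschitz-equivalence argument by choosing the Sasaki metric $\overline{d}$ on $V$, for which one can arrange $\overline{d}(v,Z) = |v|$ exactly on a neighborhood of $Z$; since Corollary \ref{corlipschitz} already makes the metric spectrum independent of the choice of Riemannian metric, the identity $\lambda(\Phi^t,v) = \lambda_L(\Phi^t,v)$ falls out immediately. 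Your comparability argument would also work (the ratio of two Riemannian distances to the compact submanifold $Z$ is pinched on a small tubular neighborhood), but the Sasaki-metric choice is cleaner and removes the uniformity worries you mention. Also note the paper handles the issue of $v$ possibly lying outside $N$ by shifting the start time — $\lambda(\Phi^t,v)=\lambda(\Phi^t,\Phi^s v)$ — rather than by shrinking $N$.

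The genuine gap is in the nonemptiness argument. The existence of Lyapunov-regular points somewhere in $V$ does not by itself produce a Lyapunov-regular point inside the stable set $S$, and ``shrinking toward $Z$ along the contracting directions we can choose it Lyapunov regular'' is not a proof — a priori, $S$ could consist entirely of irregular points even though regular points abound elsewhere in the bundle. The paper closes this by first observing that $Z$ is $\Phi^t$-invariant and isolated (because $M$ is), then invoking Theorem 2.13 of Salamon–Zehnder \cite{sz} to conclude that the restriction of $S$ over some chain-transitive component of the induced base flow is a genuine subbundle; Oseledec's theorem is then applied to the linear flow on that subbundle, where invariant measures over the compact base guarantee regular points. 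That structural step — producing a compact invariant vector-subbundle inside $S$ to which Oseledec applies — is exactly what your sketch is missing.
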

\begin{proof}
There exist $s > 0$ such that $\Phi^s v \in N$ and
$\phi^s(\psi(v)) \in A$. Since $\lambda(\Phi^t,v) =
\lambda(\Phi^t,\Phi^s v)$ and $\lambda(\phi^t,\psi(v)) =
\lambda(\phi^t,\phi^s(\psi(v)))$, we can assume that $v \in N$ and
$\psi(v) \in A$. By Corollary \ref{corlipschitz}, we have that
\[
\lambda(\Phi^t, v) = \lambda(\Phi^t|_N, v) = \lambda(\phi^t|_A,
\psi(v)) = \lambda(\phi^t, \psi(v)).
\]
Thus it remains to prove that $\lambda(\Phi^t, v) =
\lambda_L(\Phi^t, v)$, for all $v \in S$, metric regular. Let
$\overline{d}$ be the Sasaki metric on the normal bundle $V$ (see
Section 4.2 of \cite{borisenko}). We can choose $N$ such that
$\overline{d}(v,Z) = |v|$, for all $v \in N$. Thus, for all $v \in
S$, we have that $v$ is metric regular if and only if $v$ is
Lyapunov regular and, in this case, $\lambda(\Phi^t,v) =
\lambda_L(\Phi^t,v)$.

For the last assertion, we first observe that $Z$ is a
$\Phi^t$-invariant and isolated, since $M$ is a $\phi^t$-invariant
and isolated. Thus the restriction of $S$ over some chain
transitive component of the flow induced by $\Phi^t$ on the base
$M$ is a subbundle (see Theorem 2.13 of \cite{sz}). Thus we can
apply Oselec theorem (see \cite{oseledec}) to the restriction of
$\Phi^t$ to this subbundle, showing that $\Lambda_L(\Phi^t,S)$ is
not empty.
\end{proof}

The previous proposition and Theorem 1 of \cite{ps} imply the
following result.

\begin{corolario}
If $M$ is normally hyperbolic, then $\Lambda(\phi^t,M)$ is not
empty.
\end{corolario}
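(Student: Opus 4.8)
The plan is to reduce the corollary to Proposition~\ref{proplyapunovmetric} by producing, on a neighborhood of $M$, a smooth linearization of $\phi^t$ along its normal bundle. First I would invoke Theorem~1 of \cite{ps}: for a normally hyperbolic compact isolated invariant set $M$ it yields an open neighborhood $N$ of the null section $Z$ of the normal bundle $V$ of $M$, an open neighborhood $A$ of $M$ in $X$, and a diffeomorphism $\psi : N \to A$ with $\psi(Z) = M$ such that $\psi^{-1}\phi^t\psi$ is the restriction to $N$ of a linear flow $\Phi^t$ of $V$, namely the normal derivative cocycle of $\phi^t$ over $M$. This is the only substantial ingredient: it is a Hartman--Grobman type statement in the normally hyperbolic category, and all the analytic content --- the stable and unstable foliations near $M$, the fiberwise straightening, and the fact that the resulting conjugacy is a diffeomorphism and not merely a homeomorphism --- is carried out there.

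Granting this, the rest is formal. I would observe that $M$, being normally hyperbolic and (under the standing hypothesis that $M$ is not a repeller, which is what makes the metric spectrum defined) having nontrivial stable bundle, is such that the stable set $S$ of $Z$ strictly contains $Z$ and the local stable manifold $\psi(S \cap N)$ is contained in $\mathrm{st}(M)$. Then I would check that the quadruple $(N,A,\psi,\Phi^t)$ just obtained satisfies verbatim the hypotheses of Proposition~\ref{proplyapunovmetric}: $(X,d)$ is a Riemannian manifold, $\psi$ is a diffeomorphism of an open neighborhood of the null section onto an open neighborhood of $M$, $M=\psi(Z)$, and $\psi^{-1}\phi^t\psi=\Phi^t|_N$. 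That proposition then gives directly the identity $\Lambda(\phi^t,M)=\Lambda_L(\Phi^t,S)$ together with its nonemptiness, which is exactly the assertion to be proved.

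The main obstacle is thus entirely concentrated in the use of Theorem~1 of \cite{ps}: one must ensure that the notion of normal hyperbolicity employed in the present corollary matches the hypothesis of that theorem, and that its conclusion genuinely provides a single linearizing conjugacy defined on a full neighborhood of $Z$ in $V$ and intertwining $\phi^t$ with a global linear flow on $V$. If instead \cite{ps} only linearizes along the stable and unstable manifolds of $M$ separately, then an additional step would be needed: gluing these two linearizations, via the local product structure near $M$, into one diffeomorphism $\psi$ on a neighborhood $N$ of $Z$, and checking that the glued map remains smooth. That gluing, and the verification of its smoothness, is the one point where a genuine argument rather than a citation would be required.
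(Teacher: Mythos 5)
Your proof matches the paper's argument exactly: the paper disposes of this corollary in a single sentence, ``The previous proposition and Theorem 1 of \cite{ps} imply the following result,'' which is precisely the reduction you carry out --- invoke Pugh--Shub's linearization theorem to produce the conjugacy $\psi$ on a neighborhood of the null section of the normal bundle, then feed it into Proposition~\ref{proplyapunovmetric}. Your closing caveat about whether Theorem 1 of \cite{ps} actually delivers a \emph{smooth} conjugacy on a full neighborhood (as Proposition~\ref{proplyapunovmetric} demands) rather than merely a topological one, or separate stable/unstable linearizations, is a fair point of scrutiny, but the paper itself addresses it with nothing beyond the bare citation, so this is not a gap you introduced.
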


\section{Flag spectrum}

In this section, we determine explicitly the metric spectrum of a
linear induced flow $g^t$ relative to a Morse component ${\rm
fix}_\T(H,w)$. First we need to introduce some suitable
constructions, which are related to the constructions presented in
the Section \ref{preliminaries}. We denote
\[
\Lambda_\T(H,w) = \{\alpha(H) : \g_\alpha \subset \l_{wb_\T}\},
\]
where
\[
\l_{wb_\T} = (\n^+_H \oplus \n^-_H) \cap w\n^-_\T.
\]
Now writing
\begin{equation}\label{eqLambda}
\Lambda_\T(H,w) = \{ \lambda_1 > \cdots > \lambda_{n_\T^w}\},
\end{equation}
for each $i \in \{1,\ldots,n_\T^w\}$, we denote
\[
\b_i = \{X : \ad(H)X = \lambda_i X \}
\]
and define the family of subspaces given by
\begin{equation}\label{eqlix}
\l^i_x = \l_x \cap \sum \{ \b_j : j \geq i \}.
\end{equation}

\begin{proposicao}\label{proplix}
For each $i \in \{1,\ldots,n_\T^w\}$, we have that
\[
\l^i_x = k \l^i_{wb_\T}
\]
for $x = k w b_\T$, where $k \in K_H$. Furthermore, if $H$ is the
hyperbolic type of $g^t$ and
\begin{enumerate}[$(i)$]
\item $w$ and $\T$ are arbitrary, with $g^t$ conformal or

\item $w=1$ and $\Theta \subset \Sigma(H)$ or $\Sigma (H)\subset
\Theta$, with $g^t$ arbitrary,
\end{enumerate}
then
\[
g^t \l^i_x = \l^i_{g^tx}.
\]
\end{proposicao}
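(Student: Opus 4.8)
The plan is to obtain both assertions from two ingredients already at hand: the identity $\l_x = k\,\l_{wb_\T}$ for $x = kwb_\T$ with $k\in K_H$, and the fact that $\Ad(k)$ and $\Ad(g^t)$ commute with $\ad(H)$. Since $w$ normalises $\adcds$, the subspace $w\n^-_\T$ is a sum of root spaces $\g_\alpha$, and those contained in $\n^+_H\oplus\n^-_H$ are precisely the ones with $\alpha(H)\neq 0$; hence $\l_{wb_\T} = (\n^+_H\oplus\n^-_H)\cap w\n^-_\T$ is the direct sum of $\l^+_{wb_\T} = \n^+_H\cap w\n^-_\T$ and $\l^-_{wb_\T} = \n^-_H\cap w\n^-_\T$. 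Combined with $\l^\pm_x = k\,\l^\pm_{wb_\T}$ and $\l_x = \l^+_x\oplus\l^-_x$ this yields $\l_x = k\,\l_{wb_\T}$.

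For the first assertion, an element $k\in K_H$ satisfies $\Ad(k)H = H$, so $\Ad(k)$ commutes with $\ad(H)$ and maps each eigenspace $\b_j$ onto itself, hence preserves $\sum\{\b_j : j\geq i\}$. As $\Ad(k)$ is a linear isomorphism it commutes with intersections of subspaces, so
\[
k\,\l^i_{wb_\T} = k\Big(\l_{wb_\T}\cap\sum\{\b_j:j\geq i\}\Big) = \l_x\cap\sum\{\b_j:j\geq i\} = \l^i_x .
\]

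For the second assertion I would first invoke that under hypothesis $(i)$ or $(ii)$ the assignment $x\mapsto\l_x$ is $g^t$-equivariant, i.e. $g^t\l_x = \l_{g^tx}$; this is Proposition 5.5 of \cite{pfs-jordan} in case $(i)$ and Corollary 3.6 of \cite{pss-conley} in case $(ii)$, exactly as recalled at the end of Section \ref{preliminaries}. By Proposition \ref{propmorseflag} the Morse component ${\rm fix}_\T(H,w)$ is $g^t$-invariant, so $g^tx$ again lies in it and the data $\Lambda_\T(H,w)$, $\b_j$, $i$ used to define $\l^i_{g^tx}$ coincide with those for $x$. Finally, each Jordan component of $g^t$ centralises $H$ (for $h^t=\exp(tH)$ this is clear, and for $e^t$ and $u^t=\exp(tN)$ it follows from their commuting with $h^t$, as in \cite{pfs-jordan}), so $\Ad(g^t)=\Ad(e^t)\Ad(h^t)\Ad(u^t)$ commutes with $\ad(H)$ and preserves every $\b_j$, hence $\sum\{\b_j:j\geq i\}$. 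Therefore
\[
g^t\l^i_x = g^t\l_x\cap g^t\Big(\sum\{\b_j:j\geq i\}\Big) = \l_{g^tx}\cap\sum\{\b_j:j\geq i\} = \l^i_{g^tx} .
\]

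Once these ingredients are in place the proof is essentially bookkeeping; the one point needing care is to match hypotheses $(i)$ and $(ii)$ with the correct equivariance statement for $x\mapsto\l_x$ and to ensure the flow does not disturb the eigenspace decomposition indexing $\l^i_x$, neither of which is a genuine obstacle.
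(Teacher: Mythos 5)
Your proposal is correct and follows essentially the same route as the paper: both rest on the observation that $K_H$ and $g^t$ lie in the centralizer $G_H$ of $H$, which preserves each $\ad(H)$-eigenspace $\b_j$ and hence $\sum\{\b_j : j\geq i\}$, combined with the already-established identities $\l_x = k\,\l_{wb_\T}$ and (under $(i)$ or $(ii)$) $g^t\l_x = \l_{g^tx}$. You spell out a few preparatory steps the paper leaves implicit (the root-space decomposition giving $\l_x = \l^+_x\oplus\l^-_x$, and the Jordan-component-by-component check that $\Ad(g^t)$ commutes with $\ad(H)$), but the argument is the same.
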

\begin{proof}
First we note that the eigenspaces of $\ad(H)$ are invariant by
the centralizer $G_H$ of $H$ in $G$, since the elements of $G_H$
commute with $\ad(H)$. The first claim follows, since $\l_x = k
\l_{wb_\T}$, for $x = k w b_\T$, where $k \in K_H \subset G_H$. By
Corollary 3.6 and 3.9 of \cite{pss-conley} and Proposition 5.5 of
\cite{pfs-jordan}, if $(i)$ or $(ii)$ are verified, then we that
$g^t\mathfrak{l}_{x} = \mathfrak{l}_{g^tx}$. Thus the second claim
follows, since $g^t \in G_H$.
\end{proof}

Following the proof of Proposition 3.2 of \cite{pss-conley}, for
each $i \in \{1,\ldots,n_\T^w\}$, we have that
\[
V^i_\T(H,w) = \bigcup \{ \frak{l}^i_{x}\cdot x:\, x \in
\mathrm{fix}_{\Theta }(H,w) \}
\]
is a differentiable subbundle of the tangent bundle of
$V_\T(H,w)$. The norm in $V_{\Theta }(H,w)$ is the restriction of
the norm in the tangent bundle of $\F_\T$ induced by the
Riemannian metric introduced in Section \ref{preliminaries}. We
need to prove an elementary fact about the norm $|\cdot|$ in
$V_{\Theta }(H,w)$.

\begin{lema}\label{lemaspherical}
If $v = X \cdot x \in V_{\Theta }(H,w)$, where $X \in
\frak{l}_{x}$ and $x \in {\rm fix}_\T(H,w)$, then $|v| = |X|$.
\end{lema}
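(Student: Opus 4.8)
The plan is to trace back through the definitions and use the $K$-invariance of the Riemannian metric on $\F_\T$. Recall from Section \ref{preliminaries} that the metric on $\F_\T$ is defined precisely by the rule $|Y \cdot x| = |Y|$ whenever $Y \cdot x = k(Y' \cdot b_\T)$ with $k \in K$ and $|Y| = |Y'|$; equivalently, identifying $T_{b_\T}\F_\T$ with $\n_\T$ via $Y \mapsto Y \cdot b_\T$, the metric at $b_\T$ is $\prod{\cdot,\cdot}$, and it is transported to all other points by the (isometric) action of $K$. The norm on $V_\T(H,w)$ is just the restriction of this metric, so the claim $|v| = |X|$ is really a statement about the tangent space of $\F_\T$ at the point $x \in \mathrm{fix}_\T(H,w)$.

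First I would write $x = kwb_\T$ with $k \in K_H$, and recall that $X \in \l_x = k\,\l_{wb_\T}$, so $X = k X_0$ with $X_0 \in \l_{wb_\T} \subset \n^+_H \oplus \n^-_H$. Using the equivariance $g(Y \cdot p) = gY \cdot gp$ of induced vector fields, we have $X \cdot x = (kX_0)\cdot(kwb_\T) = k(X_0 \cdot wb_\T)$. Since $k \in K$ acts by an isometry of the Riemannian metric on $\F_\T$, it follows that $|X \cdot x| = |X_0 \cdot wb_\T|$, and also $|X| = |kX_0| = |X_0|$ because $k \in K$ acts on $\g$ by $\prod{\cdot,\cdot}$-isometries. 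So it suffices to prove the lemma at the point $wb_\T$, i.e. to show $|X_0 \cdot wb_\T| = |X_0|$ for $X_0 \in \l_{wb_\T}$.

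Next I would handle the base point $wb_\T$ by a further reduction to $b_\T$. Pick a representative $\bar w \in M^* \subset K$ of the Weyl element $w$; then $wb_\T = \bar w b_\T$ and, for $X_0 \in \l_{wb_\T} = (\n^+_H \oplus \n^-_H)\cap w\n^-_\T$, write $X_0 = \bar w Y$ with $Y = \bar w^{-1} X_0 \in \n^-_\T$. Then $X_0 \cdot wb_\T = \bar w Y \cdot \bar w b_\T = \bar w(Y \cdot b_\T)$. Now $Y \cdot b_\T$ is the image of $Y \in \n_\T$ under the canonical identification $T_{b_\T}\F_\T \cong \n_\T$, so by the very definition of the metric, $|Y \cdot b_\T| = |Y|$. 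Applying the isometry $\bar w \in K$ once more gives $|X_0 \cdot wb_\T| = |\bar w(Y\cdot b_\T)| = |Y \cdot b_\T| = |Y| = |\bar w^{-1} X_0| = |X_0|$. Chaining the two reductions yields $|v| = |X \cdot x| = |X|$, as desired.

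The only subtlety — and the step I would check most carefully — is the reduction at $wb_\T$: one must make sure that $\bar w^{-1} X_0$ really lies in $\n_\T$ (not merely in some larger space) so that the defining formula $|Y \cdot b_\T| = |Y|$ applies verbatim, and that the identification of $T_{b_\T}\F_\T$ with $\n_\T$ is the one used to define the metric. This is immediate from $X_0 \in w\n^-_\T$, hence $\bar w^{-1}X_0 \in \n^-_\T \subset \n_\T$. Everything else is a routine application of the equivariance $g(Y\cdot p) = gY\cdot gp$ together with the facts that $K$ acts by $\prod{\cdot,\cdot}$-isometries on $\g$ and by Riemannian isometries on $\F_\T$.
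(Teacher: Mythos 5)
Your proof is correct and follows essentially the same route as the paper's: reduce via the $K_H$-action to the point $wb_\T$, then via a representative $\bar w \in M^* \subset K$ to the base point $b_\T$, and invoke the defining formula $|Y \cdot b_\T| = |Y|$ together with the fact that $K$ acts by $\prod{\cdot,\cdot}$-isometries on $\g$. The only difference is presentational — the paper collapses the two reductions into the single identity $v = kw(w^{-1}Y \cdot b_\T)$ — and you are slightly more careful in verifying that $\bar w^{-1}X_0$ really lands in $\n^-_\T \subset \n_\T$, which the paper leaves implicit.
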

\begin{proof}
We have that $X \in \frak{l}_{x} = k\frak{l}_{wb_\T}$ and that $x
= kwb_\T$, for some $k \in K_H$. Thus $Y \in k^{-1}X \in
\frak{l}_{wb_\T}$ and
\[
v = k(Y \cdot wb_\T) = kw(w^{-1} \cdot b_\T).
\]
Thus, by the definition of $|\cdot|$, we have that
\[
|v| = |w^{-1}Y| = |Y| = |k^{-1}X| = |X|,
\]
where we use that $K$ acts in $\g$ by
$\prod{\cdot,\cdot}$-isometries.
\end{proof}

Now we prove a strong version of Oseledec theorem (see
\cite{oseledec}), determining explicitly the Oseledec
decomposition of $V_\T(H,w)$ relative to the flow $g^t$.

\begin{teorema}\label{teospherical}
Let $H$ be the hyperbolic type of $g^t$. For each
\begin{enumerate}[$(i)$]
\item arbitrary $w$ and $\T$, when $g^t$ is conformal or

\item $w=1$ and $\Theta \subset \Sigma(H)$ or $\Sigma (H)\subset
\Theta$, when $g^t$ is arbitrary,
\end{enumerate}
each $i \in \{1,\ldots,n_\T^w\}$ and each $v \in V^i_\T(H,w) -
V^{i-1}_\T(H,w)$, we have that
\[
\lambda_L(g^t,v) = \lambda_i.
\]
In particular, every point of $V_\T(H,w)$ is Lyapunov regular and
\[
\Lambda_L(g^t,V_\T(H,w)) = \Lambda_\T(H,w).
\]
\end{teorema}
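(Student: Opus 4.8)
The plan is to compute $\lambda_L(g^t,v)$ directly by exploiting the linear-algebraic structure of the linearization. By Proposition~\ref{proplix}, under either hypothesis $(i)$ or $(ii)$ we have $g^t\l^i_x=\l^i_{g^tx}$, so each subbundle $V^i_\T(H,w)$ is $g^t$-invariant, and by \eqref{eqlix} the fibers are spanned by root spaces with $\ad(H)$-eigenvalues in $\{\lambda_i,\ldots,\lambda_{n_\T^w}\}$. First I would fix $v=X\cdot x\in V^i_\T(H,w)-V^{i-1}_\T(H,w)$ with $X\in\l^i_x$, write $x=kwb_\T$ with $k\in K_H$, and use equivariance of the induced flow, $g^t(X\cdot x)=g^tX\cdot g^tx$, together with Lemma~\ref{lemaspherical} to reduce the Lyapunov exponent to the growth rate of $|g^tX|$ in $\g$, where $g^tX\in\l^i_{g^tx}$.

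Next I would analyze the growth of $|g^tX|$. Writing $g^t=e^th^tu^t$ with $h^t=\exp(tH)$, the elliptic component $e^t$ acts by $\prod{\cdot,\cdot}$-isometries (after choosing a suitable inner product) and so does not affect the exponential rate; the unipotent component $u^t=\exp(tN)$ contributes only polynomially since $\ad(N)$ is nilpotent; under hypothesis $(i)$ it is trivial anyway, and under $(ii)$ one must check that $N$ preserves the relevant filtration so that its polynomial contribution does not mix eigenspaces. The dominant term is $h^t$: for $X$ lying in the $\lambda_j$-eigenspace $\b_j$ of $\ad(H)$, we have $h^tX=\mathrm{e}^{t\ad(H)}X=\mathrm{e}^{\lambda_j t}X$, so $|h^tX|=\mathrm{e}^{\lambda_j t}|X|$. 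Decomposing $X=\sum_{j\geq i}X_j$ with $X_j\in\b_j\cap\l_x$ and $X_i\neq0$ (this is exactly the condition $v\notin V^{i-1}_\T(H,w)$), the largest eigenvalue appearing is $\lambda_i$, so $|g^tX|$ grows like $\mathrm{e}^{\lambda_i t}$ up to a polynomial factor, giving $\lambda_L(g^t,v)=\lambda_i$.

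The main obstacle I expect is controlling the interaction between the Jordan components and the filtration $V^i_\T(H,w)$ under hypothesis $(ii)$, where $g^t$ need not be conformal. One needs to know that $\ad(N)$ (and the elliptic part) maps $\l^i_x$ into itself — equivalently, that $g^t\l^i_x=\l^i_{g^tx}$, which is precisely what Proposition~\ref{proplix} provides via Corollaries 3.6 and 3.9 of \cite{pss-conley} — and additionally that the polynomial growth from $u^t$ within a fixed eigenspace cannot outpace the exponential gap $\lambda_i>\lambda_{i+1}$; since polynomial is dominated by any positive exponential difference and contributes nothing when $\lambda_i$ is the top eigenvalue present, the rate is unaffected. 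Once the equality $\lambda_L(g^t,v)=\lambda_i$ holds for every $v\in V^i_\T(H,w)-V^{i-1}_\T(H,w)$ and every $i$, the two final assertions are immediate: the subspaces $V^i_\T(H,w)$ give an $\ad(H)$-eigenspace-type filtration whose successive quotients realize exactly the values $\lambda_1,\ldots,\lambda_{n_\T^w}$, so every point is Lyapunov regular and $\Lambda_L(g^t,V_\T(H,w))=\{\lambda_1,\ldots,\lambda_{n_\T^w}\}=\Lambda_\T(H,w)$.
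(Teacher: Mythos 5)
Your proposal follows essentially the same route as the paper: reduce via Lemma~\ref{lemaspherical} to the growth of $|g^tX|$ in $\g$, split off the isometric elliptic part, and compare the exponential contribution of $h^t$ with the polynomial contribution of $u^t$ across $\ad(H)$-eigenspaces to extract the top exponent $\lambda_i$. The one concern you flag---whether $\ad(N)$ must preserve the filtration---is actually not needed: the paper first commutes $h^t$ past $k\in K_H$ and conjugates $\ad(N)$ by $k^{-1}$ to a nilpotent $\mathcal{N}=k^{-1}\ad(N)k$, then expands $\mathrm{e}^{t\mathcal{N}}h^tX=\sum_\alpha\mathrm{e}^{\alpha(H)t}\sum_n\frac{t^n}{n!}\mathcal{N}^nX_\alpha$ and shows the dominant term is $\frac{\mathrm{e}^{\lambda_i t}t^l}{l!}\mathcal{N}^lY\neq 0$ with $Y=\sum_{\alpha(H)=\lambda_i}X_\alpha$, which works for an arbitrary nilpotent $\mathcal{N}$ with no compatibility between $\mathcal{N}$ and the eigenspaces required.
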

\begin{proof}
Using Proposition \ref{proplix} and the definition of
$V^i_\T(H,w)$, we have that $v = k(X \cdot wb_\T)$, for some $k
\in K_H$ and some $X \in \l_{wb_\T}^i - \l_{wb_\T}^{i-1}$. Writing
$X = \sum_\alpha X_\alpha$, with $X_\alpha \in \g_\alpha$, by the
equations (\ref{eqLambda}) and (\ref{eqlix}), we have that
\[
\lambda_i = \max \{\alpha(H) : X_\alpha \neq 0\}.
\]
On the other hand, by Lemma \ref{lemaspherical}, we have that
\begin{equation}\label{eq1}
|g^t v| = |g^t kX \cdot g^t kwb_\T| = |g^t kX|.
\end{equation}
Let $g^t = e^t u^t h^t$ be the multiplicative Jordan decomposition
of the flow $g^t$. We have that $e^t$ acts on $\g$ by
$\prod{\cdot,\cdot}$-isometries, that $u^t = \exp(tN)$ with
$\ad(N)$ a nilpotent operator and that $h^t$ commutes with $K_H$.
Thus we have that
\[
|g^t kX| = |e^t u^t h^t kX| = |u^t k h^t X| = |{\rm e}^{t\ad(N)} k
h^t X|,
\]
since $\exp(tN)Z = {\rm e}^{t\ad(N)}Z$. Hence
\begin{equation}\label{eq2}
|g^t kX| = |{\rm e}^{t\ad(N)} k h^t X| = |k^{-1}{\rm e}^{t\ad(N)}
k h^t X| = |{\rm e}^{t\mathcal{N}} h^t X|,
\end{equation}
where $\mathcal{N} = k^{-1}\ad(N)k$ is also a nilpotent operator.

By equations (\ref{eq1}) and (\ref{eq2}), denoting $\lambda =
\lambda_i$, it is sufficient to show that
\begin{equation}\label{eq3}
\lambda = \lim_{t \to \infty} \frac{1}{t} \log |{\rm
e}^{t\mathcal{N}} h^t X|.
\end{equation}
Since $h^t X_\alpha = {\rm e}^{\alpha(H)t} X_\alpha$, we have that
\begin{equation}\label{eq4}
{\rm e}^{t\mathcal{N}} h^t X = \sum_\alpha {\rm e}^{\alpha(H)t}
{\rm e}^{t\mathcal{N}} X_\alpha = \sum_\alpha {\rm e}^{\alpha(H)t}
\sum_{n} \frac{t^n}{n!} \mathcal{N}^n X_\alpha.
\end{equation}
Denote
\[
Y = \sum_{\alpha(H) = \lambda} X_\alpha
\]
and
\[
l = \max \{n : \mathcal{N}^n Y \neq 0 \} \leq m,
\]
where $m \in \mathbb{N}$ is such that $\mathcal{N}^{m+1} = 0$. In
order to get equation (\ref{eq3}), we just need to prove that
\begin{equation}\label{eq5}
\lim_{t \to \infty} \frac{|{\rm e}^{t\mathcal{N}} h^t
X|}{\frac{{\rm e}^{\lambda t}t^l}{l!} |\mathcal{N}^lY|} = 1,
\end{equation}
since
\[
\lim_{t \to \infty} \frac{1}{t} \log \left(\frac{{\rm e}^{\lambda
t}t^l}{l!} |\mathcal{N}^lY|\right) =  \lambda.
\]
Using equation (\ref{eq4}), we have that
\begin{equation}\label{eq6}
\frac{|{\rm e}^{t\mathcal{N}} h^t X|}{\frac{{\rm e}^{\lambda
t}t^l}{l!} |\mathcal{N}^lY|} = \frac{\left|\sum_\alpha {\rm
e}^{(\alpha(H) - \lambda)t} \sum_{n} \frac{t^{n - l}}{n!}
\mathcal{N}^n X_\alpha\right|}{\frac{1}{l!} |\mathcal{N}^lY|} =
\frac{|U_t + V_t|}{ |\frac{1}{l!}\mathcal{N}^lY|}
\end{equation}
where
\[
U_t = \sum_{\alpha(H) = \lambda} {\rm e}^{(\alpha(H) - \lambda)t}
\sum_{n=0}^m \frac{t^{n - l}}{n!} \mathcal{N}^n X_\alpha =
\sum_{n=0}^l \frac{t^{n - l}}{n!} \mathcal{N}^n Y
\]
and
\[
V_t = \sum_{\alpha(H) < \lambda} {\rm e}^{(\alpha(H) - \lambda)t}
\sum_{n=0}^m \frac{t^{n - l}}{n!} \mathcal{N}^n X_\alpha.
\]
It is immediate that
\[
\lim_{t \to \infty} U_t = \frac{1}{l!}\mathcal{N}^lY \qquad
\mbox{and} \qquad \lim_{t \to \infty} V_t = 0,
\]
and thus equation (\ref{eq6}) implies the equation (\ref{eq5}).

The last assertion follows, since we have that $V_\T(H,w)$ is
given by the disjoint union of $V^i_\T(H,w) - V^{i-1}_\T(H,w)$,
for $i \in \{1,\ldots,n_\T^w\}$.
\end{proof}

We denote
\[
\Lambda_\T^-(H,w) = \Lambda_\T(H,w) \cap \mathbb{R}^-
\]
and, for each $\lambda_i \in \Lambda_\T^-(H,w)$, we define
\[
{\rm st}_\T^i(H,w) = \psi(V^i_\T(H,w) - V^{i-1}_\T(H,w)),
\]
where $\psi: V_\T(H,w) \to \F_\T$ is the map presented in Theorem
\ref{teolineariz}. It is immediate that the stable set of ${\rm
fix}_\T(H,w)$ relative to the flow $g^t$ is given by the following
disjoint union
\[
{\rm st}_\T(H,w) = \bigcup \{{\rm st}_\T^i(H,w) : \lambda_i \in
\Lambda_\T^-(H,w)\}.
\]
The following result determines explicitly the metric spectrum of
$g^t$ relative to the Morse components.

\begin{corolario}\label{corspherical}
Let $H$ be the hyperbolic type of $g^t$. For each
\begin{enumerate}[$(i)$]
\item arbitrary $w$ and $\T$, when $g^t$ is conformal or

\item $w=1$ and $\Theta \subset \Sigma(H)$ or $\Sigma (H)\subset
\Theta$, when $g^t$ is arbitrary
\end{enumerate}
and each $x \in {\rm st}_\T^i(H,w)$, we have that
\[
\lambda(g^t,x) = \lambda_i.
\]
In particular, every point of ${\rm st}_\T(H,w)$ is metric regular
and
\[
\Lambda(g^t,{\rm fix}_\T(H,w)) = \Lambda_\T^-(H,w).
\]
\end{corolario}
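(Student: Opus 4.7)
The plan is to reduce the corollary to Proposition \ref{proplyapunovmetric} and Theorem \ref{teospherical} via the linearization map $\psi$, checking that under conditions $(i)$ or $(ii)$ the map $\psi$ provides a genuine conjugation between $g^t$ restricted to a neighborhood of ${\rm fix}_\T(H,w)$ and the linear flow $g^t$ on a neighborhood of the null section of $V_\T(H,w)$.

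First I would record the conjugation hypothesis. By Proposition \ref{proplix}, under conditions $(i)$ or $(ii)$ the assignment $x \mapsto \mathfrak{l}_x$ is equivariant by $g^t$, so $V_\T(H,w)$ is $g^t$-invariant and, as noted in Section \ref{preliminaries} right after Theorem \ref{teolineariz}, $\psi$ is an equivariant diffeomorphism between a neighborhood $\mathcal{N}$ of the null section and a neighborhood $N$ of ${\rm fix}_\T(H,w)$. Therefore the hypothesis of Proposition \ref{proplyapunovmetric} is met, with $\phi^t = g^t|_N$, $M = {\rm fix}_\T(H,w)$, and $\Phi^t = g^t$ acting linearly on $V_\T(H,w)$. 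Consequently, for every $v$ in the stable set $S$ of the null section,
\[
\lambda(g^t, \psi(v)) = \lambda_L(g^t, v).
\]

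Next I would identify $S$ in terms of the decomposition from Theorem \ref{teospherical}. Because $g^t$ is linear on the fibers of $V_\T(H,w)$, a vector $v$ lies in $S$ if and only if $|g^t v| \to 0$, which is equivalent to $\lambda_L(g^t,v) < 0$. Theorem \ref{teospherical} says that each $v \in V^i_\T(H,w) - V^{i-1}_\T(H,w)$ has Lyapunov exponent exactly $\lambda_i$, so
\[
S = \bigcup \{ V^i_\T(H,w) - V^{i-1}_\T(H,w) : \lambda_i \in \Lambda_\T^-(H,w)\},
\]
and by Theorem \ref{teolineariz} $(ii)$ the image $\psi(S)$ is precisely $\bigcup \{{\rm st}_\T^i(H,w) : \lambda_i \in \Lambda_\T^-(H,w)\} = {\rm st}_\T(H,w)$. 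Combining this with the previous display, for each $x = \psi(v) \in {\rm st}_\T^i(H,w)$ we obtain $\lambda(g^t, x) = \lambda_L(g^t,v) = \lambda_i$, and taking the union over $i$ with $\lambda_i \in \Lambda_\T^-(H,w)$ yields $\Lambda(g^t, {\rm fix}_\T(H,w)) = \Lambda_\T^-(H,w)$.

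The main obstacle I anticipate is the local nature of the conjugation: Proposition \ref{proplyapunovmetric} is phrased for $v$ already in a neighborhood of the null section, while a generic $x \in {\rm st}_\T^i(H,w)$ may be far from ${\rm fix}_\T(H,w)$. I would resolve this exactly as in the proof of Proposition \ref{proplyapunovmetric}: since $x \in {\rm st}_\T(H,w)$ implies $g^s(x) \in N$ for some $s > 0$, and $\lambda(g^t,x) = \lambda(g^t, g^s(x))$ (the $\limsup$-type definition is invariant under finite-time shifts), one can replace $x$ by $g^s(x)$ and $v$ by $g^s(v) = \psi^{-1}(g^s(x))$ before applying the conjugation. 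A minor point to check along the way is that $g^s(v)$ still lies in the same stratum $V^i_\T(H,w) - V^{i-1}_\T(H,w)$, which follows from Proposition \ref{proplix}, since $g^t \l^i_x = \l^i_{g^t x}$ preserves the filtration fiberwise.
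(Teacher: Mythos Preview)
Your proposal is correct and follows essentially the same approach as the paper: the paper's proof is a one-line citation of Propositions \ref{propmorseflag} and \ref{proplyapunovmetric} together with Theorems \ref{teolineariz} and \ref{teospherical}, after noting that under $(i)$ or $(ii)$ the bundle $V_\T(H,w)$ is $g^t$-invariant and $\psi$ is $g^t$-equivariant. Your write-up simply unpacks these citations in more detail; the only remark is that your last paragraph about the finite-time shift is already absorbed into the statement and proof of Proposition \ref{proplyapunovmetric} (which is stated for all $v \in S$, not just for $v$ near the null section), so that discussion, while not wrong, is redundant.
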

\begin{proof}
Under the above hypothesis, we have that $V_{\Theta }(H,w)$ is
invariant and $\psi : V_{\Theta }(H,w)\to {\mathbb F}_{\Theta }$
is equivariant by the flow $g^t$. Hence the corollary is a
immediate consequence of Propositions \ref{propmorseflag} and
\ref{proplyapunovmetric} and Theorems \ref{teolineariz} and
\ref{teospherical}.
\end{proof}

The previous result allows us the following definition. The
\emph{flag spectrum} of an arbitrary linear induced flow $g^t$ is
given by $\Lambda^-(H,1)$, its metric spectrum relative to the
unique attractor component ${\rm fix}(H,1)$ in the maximal flag
manifold $\F$, where $\T = \emptyset$.

\end{document}